\definecolor{gray}{gray}{0.4}
\renewcommand{\a}{\alpha}
\renewcommand{\b}{\beta}
\renewcommand{\(}{\left\(}
\renewcommand{\)}{\right\)}
\renewcommand{\[}{\left\[}
\renewcommand{\]}{\right\]}
\renewcommand{\i}{\infty}
\numberwithin{equation}{section}
\theoremstyle{plain}
\newtheorem{theorem}{Theorem}[section]
\newtheorem{corollary}[theorem]{Corollary}
\def\proof{\@ifnextchar[{\@oproof}{\@nproof}}
\def\@oproof[#1][#2]{\trivlist\item[\hskip\labelsep\textit{#2 Proof of\
		#1.}~]\ignorespaces}
\def\@nproof{\trivlist\item[\hskip\labelsep\textit{Proof.}~]\ignorespaces}
\begin{document}
\title[A finite analogue of Fine's function $F(a,b;t)$]{A finite analogue of Fine's function $F(a,b;t)$} 
	%{\Large }
	
	\author{Ritika Goel}\thanks{2010 \textit{Mathematics Subject Classification.} Primary 33D15; Secondary 11P81.\\
		\textit{Keywords and phrases.} Fine's function, basic hypergeometric series, finite Heine transformation.}
	\address{Discipline of Mathematics, Indian Institute of Technology Gandhinagar, Palaj, Gandhinagar 382355, Gujarat, India} 
	\email{ritikag@iitgn.ac.in}
	%\address{Discipline of Mathematics, Indian Institute of Technology Gandhinagar, Palaj, Gandhinagar 382355, Gujarat, India} 
	%\email{bibekananda.maji@iitgn.ac.in}
	%\address{Discipline of Mathematics, Indian Institute of Technology Gandhinagar, Palaj, Gandhinagar 382355, Gujarat, India} 
	%\email{garima.sood@iitgn.ac.in}
	\begin{abstract}
	We initiate a systematic development of $F_N(a, b; t)$, a finite analogue of Fine's function $F(a, b; t)$. Our results are transformations between $F_N(a, b; t)$ and $F_N(aq^{\ell}, bq^{m}; tq^{n})$, where $\ell,m$ and $n$ take the values $0$ or $1$. 
	\end{abstract}
\maketitle
\section{Introduction}\label{intro}
In \cite{fine}, Nathan J.~Fine extensively studied the function $F(a, b; t)$ which he defined by
\begin{equation}
F(a, b; t):=\sum_{n=0}^{\infty}\frac{(aq)_n}{(bq)_n}t^n,
\end{equation}
where, here as well as in the rest of the paper, the  following standard $q$-series notation is used:
\begin{align*}
	(A)_0 &:=(A;q)_0 =1, \qquad \\
	(A)_n &:=(A;q)_n  = (1-A)(1-Aq)\cdots(1-Aq^{n-1}),
	\qquad n \geq 1, \\
	(A)_{\infty} &:=(A;q)_{\i}  = \lim_{n\to\i}(A;q)_n, \qquad |q|<1,\\
	(A_1,A_2,\cdots,A_m;q)_n&:=(A_1;q)_n(A_2;q)_n\cdots(A_m;q)_n.
\end{align*}
As shown by Fine, $F(a, b; t)$ satisfies nice properties galore and which have applications in basic hypergeometric series, theory of partitions, modular forms and mock theta functions. The first chapter of Fine's book \cite{fine} is devoted to establishing functional equations satisfied by $F(a, b; t)$, for example, \cite[p.~2, Equation (4.1)]{fine}
\begin{equation}\label{f4.1}
F(a, b; t)=\frac{1-atq}{1-t}+\frac{(1-aq)(b-atq)}{(1-bq)(1-t)}tqF(aq, bq; tq).
\end{equation}

There have been attempts to generalize the theory of $F(a, b; t)$ by considering the more general function $\displaystyle\sum\limits_{n=0}^{\infty}\frac{(aq)_n(bq)_n}{(cq)_n(dq)_n}t^n$ although they have not been quite successful, see, for example, \cite{gupta}.

Another direction is to study finite analogues of $F(a, b; t)$. Andrews and Bell \cite{andrewsbell} studied the finite analogue
\begin{equation}
F(a, b, t, N):=\sum_{n=0}^{N}\frac{(aq)_n}{(bq)_n}t^n.
\end{equation}
They constructed another function using $F(a, b, t, N)$ , namely, $S_{M, N}(a, b, t)$, which specializes to the series $S_{M, N}$ used by Euler in his unpublished proof of his famous pentagonal number theorem. One of the properties Andrews and Bell proved in their paper \cite[Lemma 2.1]{andrewsbell} is that for $N\geq1$, 
\begin{align}\label{f4.1finite}
F(a, b, t, N)=\frac{1-atq}{1-t}+\frac{(1-aq)(b-atq)}{(1-bq)(1-t)}tqF(a, b, t, N)+R_{1, N}(a, b, t),
\end{align}
where
\begin{equation}\label{r1n}
R_{1, N}(a, b, t)=\sum_{j=0}^{N}\frac{(aq)_j}{(bq)_j}t^j\left(1-\frac{(b-atq)q^j}{1-t}\right)+\frac{b-1}{1-t}.
\end{equation}
Clearly, we recover \eqref{f4.1} from \eqref{f4.1finite} upon letting $N\to\infty$ in the latter since $\lim_{N\to\infty}R_{1, N}(a, b, t)=0$.

Fine iterated \eqref{f4.1} to give a proof of the famous Rogers-Fine identity \cite[p.~15]{fine} given by
\begin{align*}
F(a, b; t)=\sum_{n=0}^{\infty}\frac{(aq)_n(atq/b)_n}{(bq)_n(t)_{n+1}}(1-atq^{2n+1})(bt)^nq^{n^2}.
\end{align*}
In the same vein, Andrews and Bell iterated \eqref{f4.1finite} and used the resulting identity along with Euler's method to give a stronger version of the Rogers-Fine identity from which they then deduce the Rogers-Fine identity.

A new finite analogue of $F(a, b; t)$ was studied by the authors of \cite[Section 9]{dems}. It is given for $N\in\mathbb{N}$ by
\begin{equation}\label{finefunctionfin}
F_{N}(a,b;t):=\sum_{n=0}^{N}\left[\begin{matrix} N\\n\end{matrix}\right]\frac{(a q)_{n}(t)_{N-n}(q)_n t^{n}}{(b q)_{n}(t)_N }.
\end{equation}
Clearly, $\lim_{N\to\infty}F_{N}(a,b;t)=F(a, b; t)$.

They naturally encountered this function in their work concerning the theory of the restricted partition function $p(n, N)$. The latter is defined as the number of partitions on $n$ whose parts are less than or equal to $N$. In this study, they derived some properties of $F_N(a, b; t)$ they needed in order to prove their finite analogue of a recent identity of Garvan \cite[Equation (1.3)]{garvan1}. These properties are the partial fraction decomposition of $F_{N}(\a, \b; t)$, namely,
\begin{equation*}
F_{N}(a, b; t)=\frac{(1-t q^{N})(a q)_N}{(b q)_{N}}\sum_{n=0}^{N}\left[\begin{matrix} N\\n\end{matrix}\right]\frac{(b/a)_n(a q)_{N-n}(a q)^n}{(a q)_{N}(1-t q^n)}.
\end{equation*}
This generalizes Fine's partial fraction decomposition of $F(a, b; t)$ \cite[p. 18, Equation (16.3)]{fine}:
\begin{equation}\label{fine16.3}
F(a,b;t)=\frac{(aq)_{\infty}}{(bq)_{\infty}}\sum_{n=0}^{\infty}\frac{(b/a)_n}{(q)_n}\frac{(aq)^n}{1-tq^n}.	
\end{equation}
They also derived a finite analogue of the Rogers-Fine identity \cite[Lemma 9.2]{dems}, that is, for
$\b\neq 0$,
\begin{align*}
F_{N}(a,b;t)=(1- t q^{N})\sum_{n=0}^{N}\left[\begin{matrix} N\\n\end{matrix}\right]\frac{(a q)_n(q)_n\left(\frac{at q}{b}\right)_n(a t q^2)_{N-1}(tb)^nq^{n^2}(1-at q^{2n+1})}{(b q)_n(t)_{n+1}(a t q^2)_{N+n}}.
\end{align*}
These two properties were used to obtain the required finite analogue of Garvan's identity \cite[Theorem 1.3]{dems}. 

At the end of \cite{dems}, it was remarked that it would be worthwhile to develop the theory of $F_{N}(a,b;t)$ considering the enormous impact and applications that the theory of $F(a, b; t)$ has. We set forth such a task with a goal of systematically developing the theory of $F_N(a,b;t)$. 

It is to be noted that Andrews \cite{andfinheine} has already embarked upon this task. Even though his results are expressed as transformations between ${}_3\phi_2$, they can be equivalently written as transformations or functional equations for $F_{N}(a, b; t)$. This rephrasing is done in Section \ref{andrews_results}. Then in Section \ref{new_results}, we present new results on  $F_{N}(a, b; t)$ we have found so far. 

It should be mentioned here that Bowman and Wesley \cite{bowmanwesley} have recently obtained new transformations for Fine's function $F(a, b; t)$ iterating what they call as Fine's ``seed'' identities. A seed identity is an identity where $F(a, b; t)$ gets transformed to $F(aq, b; t)$ or $F(a, bq; t)$ or $F(a, b; tq)$. One of the goals in this paper is obtain the seed identities for $F_N(a, b; t)$.

\section{Some results of Andrews and deductions from them}\label{andrews_results}
In \cite{andfinheine}, Andrews obtained a finite version of Heine's transformation and then derived some corollaries from it. Andrews' transformations are for the ${}_3\phi_{2}$ hypergeometric series 
\begin{align}\label{fine3phi2}
	{}_3\phi_2\begin{bmatrix}
		q^{-N},&q,& aq& ;q,q\\
		bq,&q^{1-N}/t
	\end{bmatrix}=F_N(a,b;t),
\end{align}
as can be easily seen from \eqref{finefunctionfin}.

Thus, we can rephrase these transformations in terms of $F_N(a,b;t)$.
\subsection{An identity of Andrews generalizing Equation (6.3) of \cite{fine}}
From \cite[Corollary 3]{andfinheine},
\begin{align}\label{cor3finheine}
	{}_3\phi_2\begin{bmatrix}
		q^{-N},&abt/c,& b& ;q,q\\
		bt,&bq^{1-N}/c.
	\end{bmatrix}=\frac{(c,t;q)_N}{(c/b, bt; q)_N}{}_3\phi_2\begin{bmatrix}
		q^{-N},&a,& b& ;q,q\\
		c,&q^{1-N}/t
	\end{bmatrix}
\end{align}
If we let $b \rightarrow q$, $t \rightarrow b$, $c \rightarrow tq$, $a \rightarrow atq/b$ in the above identity, this gives
\begin{equation}
F_N(a,b;t)=\frac{(1-tq^N)(1-b)}{(1-bq^N)(1-t)}\sum_{n=0}^{N}\frac{(q^{-N})_n(atq/b)_n(q)_nq^n}{(tq)_n(q^{1-N}/b)_n(q)_n}.
\end{equation}
Using the elementary identities 
\begin{eqnarray}\label{elemid}
	\left(\frac{q^{-N}}{c}\right)_n&=&\frac{(-1)^n(cq^{N-n+1})_nq^{\frac{n(n-1)}{2}}}{c^nq^{Nn}},\\
	\frac{(q^{-N})_n}{\left(\frac{q^{1-N}}{b}\right)_n}&=&\frac{(q)_N(b)_{N-n}b^n}{(q)_{N-n}(b)_Nq^n}\nonumber,
\end{eqnarray}
we are led to
\begin{equation}\label{lemma2fin}
	F_N(a,b;t)=\frac{(1-b)(1-tq^N)}{(1-t)(1-bq^N)}\sum_{n=0}^{N}\begin{bmatrix}
		N\\
		n
	\end{bmatrix}\frac{(atq/b)_n(q)_n(b)_{N-n}b^n}{(tq)_n(b)_N}.
\end{equation}
Note that this identity is a finite analogue of Fine's identity \cite[Equation (6.3)]{fine}:
\begin{equation}\label{lemma2}
	F(a,b;t) = \frac{1-b}{1-t}F(\frac{at}{b},t;b).
\end{equation}
\subsection{Another ${}_3\phi_2$ transformation of Andrews}
Lemma 1 of \cite{andfinheine} gives
\begin{equation}\label{lemma1wala}
	{}_3\phi_2\begin{bmatrix}
		q^{-N}, &a, & b& ;q,q\\
		c, &q^{1-N}/t
	\end{bmatrix}
	= \frac{(at)_n}{(t)_n}{}_3\phi_2\begin{bmatrix}
		q^{-N}, &c/b, & a& ;q, btq^{N}\\
		c, &at
	\end{bmatrix}
\end{equation}
Letting $a \rightarrow q$, $b \rightarrow aq$, $c \rightarrow bq$ in the above identity and using \eqref{fine3phi2}, we get
\begin{align}\label{lemma4wala}
	F_N(a,b;t)&=\frac{(tq)_N}{(t)_N}{}_3\phi_2\begin{bmatrix}
		q^{-N}, &b/a, & q& ;q, atq^{N+1}\\
		bq, &tq
	\end{bmatrix}\nonumber\\
&=\frac{(1-tq^N)}{(1-t)}\sum_{n=0}^{N}(-1)^n
\begin{bmatrix}
	N\\
	n
\end{bmatrix}\frac{(b/a)_n(q)_n(at)^nq^{n(n+1)/2}}{(bq)_n(tq)_n}.
\end{align}
Letting $b=0$ in the above result gives a finite analogue of \cite[Equation (6.1)]{fine}:
\begin{equation}\label{fna0t}
	F_N(a,0;t)= \frac{(1-tq^N)}{(1-t)}\sum_{n=0}^{N}(-1)^n
	\begin{bmatrix}
		N\\
		n
	\end{bmatrix}\frac{(q)_n(at)^nq^{n(n+1)/2}}{(tq)_n}.
\end{equation}
Also, if we let $a\to0$ in \eqref{lemma4wala}, we obtain a finite analogue of \cite[Equation (12.3)]{fine}:
\begin{align}\label{ir1}
	(1-t)F_N(0,b;t)= (1-tq^N)\sum_{n=0}^{N}
	\begin{bmatrix}
		N\\
		n
	\end{bmatrix}\frac{(q)_n(bt)^nq^{n^2}}{(bq)_n(tq)_n}.
\end{align}
If we let $b=t^{-1}= e^{-i\theta}$ in \eqref{ir1}, we get
\begin{align}
	(1-e^{i\theta})F_{N}(0,e^{-i\theta};e^{i\theta}) = (1-e^{i\theta}q^{N})+\sum_{n=1}^{N}\begin{bmatrix}
		N\\
		n
	\end{bmatrix}\frac{(q)_{n}q^{n^2}}{(1-2q\cos{\theta}+q^2) \cdots (1-2q^n\cos{\theta}+q^{2n})},
	\end{align}
which is a finite analogue of \cite[Equation (12.32)]{fine}.

Also, letting $b\to t$ in \eqref{ir1} gives a finite analogue of \cite[Equation (12.33)]{fine}:
 \begin{align*}
 	(1-t)F_N(0,t;t)= (1-tq^N)\sum_{n=0}^{N}
 	\begin{bmatrix}
 		N\\
 		n
 	\end{bmatrix}\frac{(q)_n(t)^{2n}q^{n^2}}{(tq)_n(tq)_n}.
 \end{align*}

Let $r$ and $m$ be positive integers, and replace $q$ by $q^m$, $t$ by $q^r$ in finite analogue of \cite[eqn(12.33)]{fine}.Then using the definition of finite analogue of Fine's function for the left side, We obtain the finite analogue of  \cite[eqn(12.331)]{fine}:
 \begin{align*}
   & \sum_{n=0}^{N}\begin{bmatrix}
      N\\
      n
    \end{bmatrix}_{q^m}\frac{(q^m,q^m)_{n}(q^r)_{N-n}     q^{rn}}{(q^r)_{N}(1-q^r)(1-q^{m+r})\cdots (1-q^{nm+r})}\\
&= (1-q^{N+r})\sum_{n=0}^{N}\begin{bmatrix}
N\\
n
\end{bmatrix}_{q^m}\frac{(q^m,q^m)_{n} q^{mn^2+2rn}}{[(1-q^r)(1-q^{m+r})\cdots(1-q^{nm+r})]^2}
 \end{align*}

\subsection{Andrews' finite Heine transformation in terms of $F_N(a, b; t)$}
Andrews' finite Heine transformation \cite[Theorem 2]{andfinheine} in terms of $F_{N}(a, b; t)$ is given by
\begin{align}
{}_3\phi_2\begin{bmatrix}
	q^{-N},&c/b,& t& ;q,q\\
	at,&q^{1-N}/b
\end{bmatrix}=\frac{(c,t;q)_N}{(b, at; q)_N}	{}_3\phi_2\begin{bmatrix}
		q^{-N},&a,& b& ;q,q\\
		c,&q^{1-N}/t
	\end{bmatrix}.
\end{align}
Let $t \rightarrow q, b \rightarrow t, c \rightarrow atq$ and $a \rightarrow b$ in the above result. Then
\begin{align}\label{withl9}
	F_N(a, b; t)=\frac{(atq,q;q)_N}{(t, bq; q)_N}\sum_{n=0}^{N}\frac{(b)_n(t)_nq^{n}}{(atq)_n(q)_n}.
\end{align}
Letting $N\to\infty$ gives
\begin{align}
	F(a,b;t)= \frac{(atq)_\infty(q)_\infty}{(t)_\infty(bq)_\infty}\sum_{n=0}^{\infty}\frac{(b)_n(t)_nq^n}{(atq)_n(q)_n}.
\end{align}
Letting $b=1$ in \eqref{withl9} gives
\begin{equation}\label{withl9sc}
	F_N(a,1;t)=\frac{(atq)_N}{(t)_N}.
\end{equation}
An interesting result is obtained if we multiply both sides of \eqref{lemma2fin} by $(1-t)$ and then let $t\to1$:
\begin{align}\label{ir}
	\lim_{t\rightarrow{1} }(1-t)F_N(a,b;t)&=\frac{(1-b)(1-tq^N)}{(1-bq^N)}F_N(a/b,1;b)\nonumber\\
	&=\frac{(1-b)(1-tq^N)}{(1-bq^N)}\cdot\frac{(aq)_N}{(b)_N}\nonumber\\
	&=\frac{(1-q^N)(aq)_N}{(bq)_N},
\end{align}
where in the last step, we used \eqref{withl9sc}. Letting $N\to\infty$ in the above result we get Equation (6.31) in Fine's book \cite{fine}:
\begin{equation*}
	\lim_{t\rightarrow{1} }(1-t)F(a,b;t)=\frac{(aq)_\infty}{(bq)_\infty}.
\end{equation*}

If we let $t\to1$ in \eqref{ir1} and compare the right-hand side of the resulting identity with $a=0$ case of \eqref{ir}, we arrive at a finite analogue of \cite[Equation (12.31)]{fine}:
\begin{equation}
	\frac{1}{(bq)_{N}}= (1-q^{N})\sum_{n=0}^{N}\begin{bmatrix}
		N\\
		n
	\end{bmatrix}\frac{(q)_n(b)^nq^{n^2}}{(bq)_n(q)_n}.
\end{equation}
The $b=1$ case of the above identity generalizes \cite[Equation (12.311)]{fine}.

Next, replace $a$ by $b/t$ in \eqref{withl9} and then let $b\to0$ to get
\begin{align}\label{1}
F_N(b/t,0;t)= \frac{(bq)_N(q)_N}{(t)_N}\sum_{n=0}^{N}\frac{(t)_nq^n}{(bq)_n(q)_n},
\end{align}
which is a finite analogue of \cite[Equation (7.3)]{fine}.

Further, if we replace $a$ by $b/t$ in \eqref{fna0t}, we have
\begin{align}\label{2}
	F_N(b/t,0;t)= \frac{(1-tq^N)}{(1-t)}\sum_{n=0}^{N}
	\begin{bmatrix}
		N\\
		n
	\end{bmatrix}\frac{(q)_n(-b)^nq^{n(n+1)/2}}{(tq)_n}.
\end{align}
Letting $t\to0$ in \eqref{1} and \eqref{2} and comparing the right-hand sides, we get
\begin{align}\label{compare}
	\sum_{n=0}^{N}\frac{q^n}{(bq)_{n}(q)_{n}} = \frac{1}{(bq)_{N}(q)_{N}}\sum_{n=0}^{N}
	\begin{bmatrix}
		N\\
		n
	\end{bmatrix}(q)_n(-b)^nq^{n(n+1)/2}.
	\end{align}
This is a finite analogue of \cite[Equation (7.32)]{fine}. If we now put $b=1$ in the above identity, we get a finite analogue of \cite[Equation (7.322)]{fine}.
 
 Letting $b=q^{-1/2}$ in \eqref{compare} and then replacing $q$ by $q^2$, we obtain a finite analogue of \cite[Equation (7.323)]{fine}:
 \begin{align*}
 	\sum_{n=0}^{N}\frac{q^{2n}}{(q)_{2n}} = \frac{1}{(q)_{2N}}\sum_{n=0}^{N}\begin{bmatrix}
 		N\\
 		n
 	\end{bmatrix}_{q^2}(-1)^n(q^2;q^2)_nq^{n^2}.
 \end{align*}
 When $a=0$, \eqref{withl9} reduces to
 \begin{equation}\label{compare1}
 	\frac{(t)_N(bq)_N}{(q)_N}F_N(0,b;t)=\sum_{n=0}^{N}\frac{(b)_n(t)_nq^{n}}{(q)_n},
 \end{equation}
which is a finite analogue of \cite[Equation (11.3)]{fine}.

A finite analogue of \cite[Equation (11.4)]{fine} is obtained if we let $b=t^{-1}$ in \eqref{compare1}:
\begin{align}
\frac{(t)_N(t^{-1}q)_N}{(q)_N}F_N(0,t^{-1};t)=\sum_{n=0}^{N}\frac{(t^{-1})_n(t)_nq^{n}}{(q)_n}.
\end{align}

\section{New results on $F_N(a, b; t)$}\label{new_results}

 Equation (2.2) of \cite{fine} is
 \begin{equation}
 	F(a,b;t) = 1+ \frac{t(1-aq)}{1-bq}F(aq,bq;t).
 \end{equation}
We begin with a finite analogue of this result which takes $(a, b)\to(aq, bq)$.
\begin{theorem}\label{thm1}
	\begin{equation}
F_N(a,b;t)= 1+\frac{t(1-aq)(1-q^{N+1})}{(1-bq)(1-tq^N)}F_N(aq,bq;t).
\end{equation}
\end{theorem}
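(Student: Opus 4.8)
The plan is to prove the identity by manipulating the defining series \eqref{finefunctionfin} directly, following the template of Fine's one-line proof of his Equation~(2.2) but carrying along the extra $N$-dependent factors forced by the truncation. First I would clear the $q$-binomial via $\begin{bmatrix}N\\n\end{bmatrix}(q)_n=(q)_N/(q)_{N-n}$, rewriting
\begin{equation*}
F_N(a,b;t)=\sum_{n=0}^{N}\frac{(q)_N}{(q)_{N-n}}\,\frac{(aq)_n\,(t)_{N-n}}{(bq)_n\,(t)_N}\,t^{n},
\end{equation*}
whose $n=0$ term is exactly $1$. Subtracting this term and reindexing $n\mapsto n+1$ in the surviving sum, the factorizations $(aq)_{n+1}=(1-aq)(aq^2)_n$ and $(bq)_{n+1}=(1-bq)(bq^2)_n$, together with the single surplus power of $t$, pull the constant $t(1-aq)/(1-bq)$ to the front and convert the summand into the ``shifted'' shape $(aq^2)_n/(bq^2)_n$ that matches $F_N(aq,bq;t)$:
\begin{equation*}
F_N(a,b;t)-1=\frac{t(1-aq)}{1-bq}\sum_{n=0}^{N-1}\frac{(q)_N}{(q)_{N-1-n}}\,\frac{(aq^2)_n\,(t)_{N-1-n}}{(bq^2)_n\,(t)_N}\,t^{n}.
\end{equation*}

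The second step is to match this sum against $F_N(aq,bq;t)=\sum_{n=0}^{N}\frac{(q)_N}{(q)_{N-n}}\frac{(aq^2)_n(t)_{N-n}}{(bq^2)_n(t)_N}t^{n}$. Term by term, the weight appearing above differs from the corresponding weight in $F_N(aq,bq;t)$ by the factor
\begin{equation*}
\frac{(q)_{N-n}\,(t)_{N-1-n}}{(q)_{N-1-n}\,(t)_{N-n}}=\frac{1-q^{N-n}}{1-tq^{N-1-n}}.
\end{equation*}
The goal is to absorb this factor and restore the full summation range so that the right-hand side collapses to $F_N(aq,bq;t)$ multiplied by the single constant $(1-q^{N+1})/(1-tq^{N})$ demanded by the statement. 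I would attempt this by writing the factor in a telescoping form in $n$ and performing an Abel/partial-summation step, engineering the $n=0$ and $n=N$ boundary contributions to supply precisely the prefactor $(1-q^{N+1})/(1-tq^{N})$.

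The main obstacle is exactly this reconciliation of the finite boundary. In the infinite case the reindexing is seamless: the shifted series is literally $F(aq,bq;t)$ and one recovers Fine's Equation~(2.2) with no dependence on any cut-off, whereas here the truncation injects the $n$-dependent factor $(1-q^{N-n})/(1-tq^{N-1-n})$ into every term, and it is not constant in $n$. Making these factors telescope so that only the stated constant survives against a bona fide $F_N(aq,bq;t)$ is the delicate heart of the argument, and I expect to need more than the naive reindexing to close it: most likely a terminating three-term contiguous relation applied to the ${}_3\phi_2$ form \eqref{fine3phi2}, which shifts the numerator parameter $aq\mapsto aq^2$ and the denominator parameter $bq\mapsto bq^2$ simultaneously while producing the additive unit, or else an induction on $N$ anchored by the evaluation \eqref{withl9sc}.
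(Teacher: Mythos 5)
Your opening is executed correctly and is, in fact, exactly the paper's opening: subtract the $n=0$ term, shift $n\mapsto n+1$, and pull out $t(1-aq)/(1-bq)$; your intermediate display agrees with the paper's. But the completion you propose cannot succeed, because the "boundary mismatch" you identify is not something to be telescoped away --- it is the signal that you are aiming at the wrong target. In your shifted sum, simply factor $(q)_N=(1-q^N)(q)_{N-1}$ and $(t)_N=(1-tq^{N-1})(t)_{N-1}$: every $N$-dependent ingredient of the summand then drops to level $N-1$, and the sum collapses \emph{exactly}, with no residue, to
\begin{equation*}
F_N(a,b;t)\;=\;1+\frac{t(1-aq)(1-q^{N})}{(1-bq)(1-tq^{N-1})}\,F_{N-1}(aq,bq;t).
\end{equation*}
The factor $(1-q^{N-n})/(1-tq^{N-1-n})$ you computed is precisely the ratio of the level-$(N-1)$ weight to the level-$N$ weight times the constant $(1-q^N)/(1-tq^{N-1})$, so it disappears entirely once you compare against $F_{N-1}(aq,bq;t)$ rather than $F_N(aq,bq;t)$. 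This is also what the paper's proof does, in disguise: it rewrites the shifted sum as a ${}_3\phi_2$ using \eqref{gr11}, recognizes it through the dictionary \eqref{fine3phi2} (where the truncation parameter has become $N-1$), and then "replaces $N$ by $N+1$" --- which is nothing but a relabeling of the display above.

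This also explains why your telescoping, contiguous-relation, and induction plans were all doomed: the statement with the \emph{same} $N$ on both sides is false for finite $N$. At $N=0$ the left side is $F_0(a,b;t)=1$ while the right side is $1+t(1-aq)(1-q)/((1-bq)(1-t))$; the discrepancy persists at $N=1$ already in the coefficient of $t$. What is true --- and what the paper's argument actually establishes before its final line mislabels the left-hand side (the relabeling $N\mapsto N+1$ should turn the left side into $F_{N+1}(a,b;t)$, a slip in the paper) --- is
\begin{equation*}
F_{N+1}(a,b;t)\;=\;1+\frac{t(1-aq)(1-q^{N+1})}{(1-bq)(1-tq^{N})}\,F_{N}(aq,bq;t),
\end{equation*}
which reduces to Fine's Equation (2.2) as $N\to\infty$, where the shift in the truncation index becomes invisible. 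So your instinct that "more than the naive reindexing" is needed was a false alarm induced by trusting the printed statement: the naive reindexing is the entire proof, provided you let the truncation index drop by one on the shifted side; conversely, no Abel summation or terminating three-term relation can manufacture a constant prefactor against a genuine $F_N(aq,bq;t)$, and no induction (anchored by \eqref{withl9sc} or otherwise) can establish an identity that fails at $N=0$.
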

\begin{proof}
	\begin{align}
		F_N(a,b;t)&=\sum_{n=0}^{N}\begin{bmatrix}
			N\\
			n
		\end{bmatrix}\frac{(aq)_n(t)_{N-n}(q)_nt^n}{(bq)_n(t)_N}\nonumber\\
	&=1+\frac{(1-aq)t}{1-bq}\sum_{n=1}^{N}\begin{bmatrix}
		N\\
		n
	\end{bmatrix}\frac{(q^2a)_{n-1}(t)_{N-n}(q)_nt^{n-1}}{(bq^2)_{n-1}(t)_N}\nonumber\\
&=1+\frac{(1-aq)t}{1-bq}\sum_{n=0}^{N-1}\frac{(q)_{N}(aq^2)_{n}(t)_{N-n-1}t^n}{(q)_{N-n-1}(bq^2)_{n}(t)_N}\nonumber\\
&=1+\frac{(1-aq)t}{(1-bq)}\frac{(1-q^N)}{(1-tq^{N-1})}\sum_{n=0}^{N-1}\frac{(q)_{N-1}(t)_{N-n-1}(aq^2)_nt^n}{(q)_{N-n-1}(t)_{N-1}(bq^2)_n}.
	\end{align}
Using the identity \cite[p.~351, (I.11)]{gasperrahman} 
\begin{equation}\label{gr11}
	\frac{(b,q)_N}{(a,q)_N}\frac{(a,q)_{N-n}}{(b,q)_{N-n}}\frac{a^n}{b^n} = \frac{(q^{1-N}/b,q)_n}{(q^{1-N}/a,q)_n}
\end{equation}
with $b \rightarrow q, a \rightarrow t$ and replacing $N$ by $N-1$, we have
\begin{eqnarray*}
F_N(a,b;t)&=&1+\frac{(1-aq)(1-q^N)t}{(1-bq)(1-tq^{N-1})}\sum_{n=0}^{N-1}\frac{(q^{1-N})_n(aq^2)_nq^n}{(q^{2-N}/t)_n(bq^2)_n}\\
&=&1+\frac{(1-aq)(1-q^N)t}{(1-bq)(1-tq^{N-1})}{}_3\phi_2\begin{bmatrix}
	q^{1-N}, &aq^2, & q& ;q,q\\
	q^{2-N}/t, &bq^2
\end{bmatrix}
\end{eqnarray*}
Replace $N$ by $N+1$ to get
\begin{align}
F_N(a,b;t)&=	1+\frac{(1-aq)(1-q^{N+1})t}{(1-bq)(1-tq^N)}{}_3\phi_2\begin{bmatrix}
		q^{-N}, &aq^2, & q& ;q,q\\
		q^{1-N}/t, &bq^2
	\end{bmatrix}\nonumber\\
&=1+\frac{(1-aq)(1-q^{N+1})t}{(1-bq)(1-tq^N)}F_N(aq,bq,t),
\end{align}
where in the last step we used \eqref{fine3phi2}.
	\end{proof}
Our next result allows us to advance the parameter $t$ in $F_N(a, b; t)$ to $tq$.
\begin{theorem}\label{thm3}
\begin{equation}
	F_N(a,b;t) = \frac{(1-b)(1-tq^{N+1})}{(1-t)(1-bq^{N+1})}+ \frac{(1-q^{N+1})(b-atq)}{(1-bq^{N+1})(1-t)}F_N(a,b;tq).
\end{equation}
\end{theorem}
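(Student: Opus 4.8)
The plan is to prove the identity directly from the definition \eqref{finefunctionfin}, in the spirit of the proof of Theorem~\ref{thm1}, by isolating the way $t$ enters each summand and manufacturing the shift $t\mapsto tq$ together with the inhomogeneous term. Write the summand of $F_N(a,b;t)$ as a $t$-free coefficient $a_n:=\left[\begin{matrix}N\\n\end{matrix}\right]\frac{(aq)_n(q)_n}{(bq)_n}$ times the factor $w_n(t):=\frac{t^n(t)_{N-n}}{(t)_N}$, so that $F_N(a,b;t)=\sum_{n=0}^N a_n w_n(t)$ and $F_N(a,b;tq)=\sum_{n=0}^N a_n w_n(tq)$.

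First I would record the two elementary recurrences governing $w_n$. Using $(t)_N=(1-t)(tq)_{N-1}$ and the identity $q^n(1-tq^{N-n})=q^n-tq^N$, a short computation gives the \emph{shift} relation $w_n(tq)=\frac{q^n-tq^N}{1-tq^N}\,w_n(t)$, valid for $0\le n\le N$; summing this against $a_n$ yields a clean auxiliary identity, namely $(1-tq^N)F_N(a,b;tq)=V(t)-tq^N F_N(a,b;t)$, where $V(t):=\sum_{n=0}^N a_n q^n w_n(t)$ is the companion sum weighted by $q^n$. Separately, the \emph{index} recurrence $(1-tq^{N-n})\,w_n(t)=t\,w_{n-1}(t)$, together with the ``seed'' relation $(1-bq^n)\frac{(aq)_n}{(bq)_n}=(1-aq^n)\frac{(aq)_{n-1}}{(bq)_{n-1}}$, will let me telescope a second relation that ties $F_N(a,b;t)$ to $V(t)$ alone (both at the same argument $t$).

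This second relation is the heart of the matter. I would set up a summation by parts: choose a multiplier $\mu_n$ so that $\sum_{n=1}^N\bigl(\mu_n a_n w_n(t)-\mu_{n-1}a_{n-1}w_{n-1}(t)\bigr)$ telescopes to the endpoint values $\mu_N a_N w_N(t)-\mu_0 a_0 w_0(t)$, while the seed relation simultaneously rewrites each interior difference as an affine combination $\alpha\,a_n w_n(t)+\beta\,a_n q^n w_n(t)$ with $\alpha,\beta$ independent of $n$. Summing then produces $\alpha F_N(a,b;t)+\beta V(t)$ on one side and the two boundary terms on the other: the $n=0$ endpoint carries the factor $(1-b)$ (this is exactly where the seed relation degenerates, as $(aq)_{-1}/(bq)_{-1}$ is absent), and the $n=N$ endpoint, after clearing $(t)_N$, contributes the factors in $q^N$. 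Equivalently, one may obtain this relation by the route of Theorem~\ref{thm1}: reindex, invoke the elementary identity \eqref{gr11}, and read off a ${}_3\phi_2$.

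Finally I would eliminate the auxiliary sum $V(t)$ between these two relations and simplify, using \eqref{fine3phi2} to restore $F_N(a,b;tq)$. The main obstacle I anticipate is the second relation: pinning down the multiplier $\mu_n$ (equivalently, the correct three–term contiguous relation in the lower parameter $q^{1-N}/t$ of the ${}_3\phi_2$) so that the interior truly reduces to an affine combination of $F_N(a,b;t)$ and $V(t)$, and then the bookkeeping of the two boundary terms so that they assemble into precisely $\frac{(1-b)(1-tq^{N+1})}{(1-t)(1-bq^{N+1})}$ and the coefficient $\frac{(1-q^{N+1})(b-atq)}{(1-bq^{N+1})(1-t)}$. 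The appearance of the combination $b-atq$ is the signature that the $n=0$ data ($b$) and the $n=N$ data ($atq$) must be merged, and fixing their relative normalization is the delicate point. As a check I would confirm that the $N\to\infty$ limit collapses to Fine's seed form $(1-t)F(a,b;t)=(1-b)+(b-atq)F(a,b;tq)$, and I would pin down all normalizations on the smallest case $N=1$ before committing to the general telescoping.
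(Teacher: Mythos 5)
Your opening reduction is correct and genuinely useful: writing $u_n=a_nw_n(t)$ with $a_n=\left[\begin{matrix}N\\n\end{matrix}\right]\frac{(aq)_n(q)_n}{(bq)_n}$ and $w_n(t)=\frac{t^n(t)_{N-n}}{(t)_N}$, one indeed has $w_n(tq)=\frac{q^n-tq^N}{1-tq^N}\,w_n(t)$, hence $(1-tq^N)F_N(a,b;tq)=V(t)-tq^N F_N(a,b;t)$ with $V(t)=\sum_{n=0}^{N}a_nq^nw_n(t)$, so the theorem is equivalent to a single linear relation $P(t)\,F_N(a,b;t)=C(t)+(1-q^{N+1})(b-atq)\,V(t)$ with explicit polynomial coefficients. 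But that relation is the entire content of the theorem, and your proposal never establishes it: the multiplier $\mu_n$, the constants $\alpha,\beta$, and the boundary bookkeeping are all deferred, and you yourself flag exactly this step as the ``main obstacle'' and ``the delicate point.'' Moreover, the claim that the interior differences can be arranged as an affine combination of $u_n$ and $q^nu_n$ is not innocuous: the term ratio is $u_n/u_{n-1}=t(1-q^{N-n+1})(1-aq^n)\big/\big((1-bq^n)(1-tq^{N-n})\big)$, and the factors $(1-tq^{N-n})$ and $(1-q^{N-n+1})$ expand into $q^{-n}$-weighted pieces, so summing the two-term recurrence for $u_n$ directly drags in a third companion sum $\sum_n a_nq^{-n}w_n(t)$ alongside $F$ and $V$. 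Eliminating it requires a specific closed-form $\mu_n$ --- in effect a Gosper--Zeilberger certificate for this contiguous relation --- whose existence you assert but never exhibit, nor verify even in the $N=1$ case you promise to check. As written this is a genuine gap, not a routine computation left to the reader.

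For comparison, the paper does not manipulate the defining series at all here: it starts from the already-proved transformation \eqref{lemma2fin} (the finite analogue of Fine's Equation (6.3)), in which $b$, not $t$, plays the role of the power variable; it peels off the $n=0$ term of \emph{that} series, reindexes, uses \eqref{gr11} to recognize a ${}_3\phi_2$, transforms that ${}_3\phi_2$ back by Andrews' identity \eqref{cor3finheine}, and finally replaces $N$ by $N+1$ and applies \eqref{elemid}. The point is that the shift $t\mapsto tq$ becomes a routine ``peel off $n=0$'' step only after conjugating by the Heine-type transformation; your parenthetical remark that the relation follows ``by the route of Theorem \ref{thm1}'' misses this, since the Theorem \ref{thm1} manipulation applied to the original series advances $a$ and $b$, not $t$, and the ${}_3\phi_2$ you would ``read off'' still needs the nontrivial input \eqref{cor3finheine} to be converted back to $F_N$-form. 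To salvage your route, either carry out the summation by parts concretely (actually find $\mu_n$ and check the boundary terms), or adopt the paper's conjugation idea: apply \eqref{lemma2fin}, do the elementary step in the $b$-variable, and transform back with \eqref{cor3finheine}.
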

\begin{proof}
	Using \eqref{lemma2fin}, we have
	\begin{align*}
		F_N(a,b;t)&=\frac{(1-b)(1-tq^N)}{(1-t)(1-bq^N)}\sum_{n=0}^{N}
		\begin{bmatrix}
			N\\
			n
		\end{bmatrix}\frac{(atq/b)_n(q)_n(b)_{N-n}b^n}{(tq)_n(b)_N}\\
		&=\frac{(1-b)(1-tq^N)}{(1-t)(1-bq^N)}+\frac{(1-b)(1-tq^N)}{(1-t)(1-bq^N)}\sum_{n=1}^{N}\begin{bmatrix}
			N\\
			n
		\end{bmatrix}\frac{(atq/b)_n(q)_n(b)_{N-n}b^n}{(tq)_n(b)_N}\\
		&=\frac{(1-b)(1-tq^N)}{(1-t)(1-bq^N)}+\frac{(1-b)(1-tq^N)(1-atq/b)b}{(1-t)(1-bq^N)(1-tq)}\sum_{n=1}^{N}\begin{bmatrix}
			N\\
			n
		\end{bmatrix}\frac{(atq^2/b)_{n-1}(q)_n(b)_{N-n}b^{n-1}}{(tq^2)_{n-1}(b)_N}\\
	&=\frac{(1-b)(1-tq^N)}{(1-t)(1-bq^N)}+\frac{(1-b)(1-tq^N)(1-atq/b)b}{(1-t)(1-bq^N)(1-tq)}\sum_{n=0}^{N-1}\begin{bmatrix}
		N\\
		n+1
	\end{bmatrix}\frac{(atq^2/b)_{n}(q)_{n+1}(b)_{N-n-1}b^{n}}{(tq^2)_{n}(b)_N}\\
	&= \frac{(1-b)(1-tq^N)}{(1-t)(1-bq^N)}+\frac{(1-b)(1-tq^N)(b-atq)(1-q^N)}{(1-t)(1-bq^N)(1-bq^{N-1})(1-tq)}\sum_{n=0}^{N-1}\frac{(q)_{N-1}(atq^2/b)_{n}(b)_{N-n-1}b^{n}}{(q)_{N-n-1}(tq^2)_{n}(b)_{N-1}}.
	\end{align*}
Let $b \rightarrow q, a \rightarrow b$ in \eqref{gr11} , replace $N$ by $N-1$ and substitute the resulting expression in the sum on the right-hand side of the above equation so that
 	\begin{align*}
 	F_N(a,b;t)&=\frac{(1-b)(1-tq^N)}{(1-t)(1-bq^N)}+\frac{(1-b)(1-tq^N)(b-atq)(1-q^N)}{(1-t)(1-bq^N)(1-bq^{N-1})(1-tq)}\sum_{n=0}^{N-1}\frac{(atq^2/b)_{n}(q^{1-N})_n q^{n}}{(q^{2-N}/b)_{n}(tq^2)_{n}}\\
 	&=\frac{(1-b)(1-tq^N)}{(1-t)(1-bq^N)}+\frac{(1-b)(1-tq^N)(b-atq)(1-q^N)}{(1-t)(1-bq^N)(1-bq^{N-1})(1-tq)}{}_3\phi_2\begin{bmatrix}
 		q^{1-N}, &atq^2/b, & q& ;q,q  \\
 		tq^2, &q^{2-N}/b
 	\end{bmatrix}
 \end{align*}
Now replace $a$ by $atq^2/b$, $b$ by $q$, $c$ by $tq^2$, $t$ by $b$ in \eqref{cor3finheine}, then replace $N$ by $N-1$ and use the resulting identity to transform the ${}_3\phi_2$ on the right-hand side of the above equation so that
\begin{align*}
	F_N(a,b;t)&=\frac{(1-b)(1-tq^N)}{(1-t)(1-bq^N)}\nonumber\\
	&\quad+\frac{(1-b)(1-tq^N)(b-atq)(1-q^N)(1-tq)(1-bq^{N-1})}{(1-t)(1-bq^N)(1-bq^{N-1})(1-tq)(1-b)(1-tq^N)}{}_3\phi_2\begin{bmatrix}
		q^{1-N},&aq,& q& ;q,q\\
		bq,&q^{2-N}/tq
	\end{bmatrix}\\
&=\frac{(1-b)(1-tq^N)}{(1-t)(1-bq^N)}+\frac{(1-q^N)(b-atq)(1-bq^{N-1})}{(1-bq^N)(1-bq^{N-1})(1-t)}\sum_{n=0}^{N-1}\frac{(q^{1-N})_n(aq)_n(q)_nq^n}{(bq)_n(q)_n(q^{2-N}/tq)_n}.
\end{align*}
Finally replace $N$ by $N+1$ and use \eqref{elemid} to obtain
\begin{align*}
	F_N(a,b;t)&=\frac{(1-b)(1-tq^{N+1})}{(1-t)(1-bq^{N+1})}+\frac{(1-q^{N+1})(b-atq)(1-bq^{N})}{(1-bq^{N+1})(1-bq^{N})(1-t)}\sum_{n=0}^{N}\frac{(q)_N(aq)_n(q)_n(tq)_{N-n}(tq)^n}{(q)_{N-n}(bq)_n(q)_n(tq)_N}\\
	&=\frac{(1-b)(1-tq^{N+1})}{(1-t)(1-bq^{N+1})}+\frac{(1-q^{N+1})(b-atq)}{(1-bq^{N+1})(1-t)}F_N(a,b;tq).
	\end{align*}
	\end{proof}
	Our next result is a generalization of \cite[Equation (4.3)]{fine}:
\begin{equation}
F(a,b;t) = \frac{1}{1-t}+ \frac{(b-a)tq}{(1-t)(1-bq)}F(a,bq;tq).
\end{equation}	
\begin{theorem}
We have
\begin{align}\label{thm5}
F_N(a,b;t) = \frac{(1-tq^{N+1})}{(1-t)}+ \frac{(1-q^{N+1})(b-a)tq}{(1-bq)(1-t)}F_N(a,bq;tq).
\end{align}
\end{theorem}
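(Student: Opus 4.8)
The plan is to imitate the proofs of Theorems \ref{thm1} and \ref{thm3}, but to start from the representation \eqref{lemma4wala} rather than from the defining series or from \eqref{lemma2fin}. The reason for this choice is twofold: the prefactor $(1-tq^{N})/(1-t)$ already present in \eqref{lemma4wala} becomes, after the relabelling $N\mapsto N+1$ carried out at the very end (exactly as in Theorems \ref{thm1} and \ref{thm3}), the constant term $(1-tq^{N+1})/(1-t)$ in \eqref{thm5}; and the summand of \eqref{lemma4wala} contains $(b/a)_{n}$, whose $n=1$ instance $(b/a)_{1}=(a-b)/a$ is precisely what manufactures the coefficient $(b-a)$ demanded by \eqref{thm5}.

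Concretely, I would first peel off the $n=0$ term of \eqref{lemma4wala}, which contributes exactly $(1-tq^{N})/(1-t)$. In the remaining sum over $1\le n\le N$ I would write $(b/a)_{n}=\tfrac{a-b}{a}(bq/a)_{n-1}$, shift the summation index down by one, and simplify the elementary factors via $\begin{bmatrix}N\\ n+1\end{bmatrix}(q)_{n+1}=(q)_{N}/(q)_{N-n-1}$, together with $(bq)_{n+1}=(1-bq)(bq^{2})_{n}$, $(tq)_{n+1}=(1-tq)(tq^{2})_{n}$ and $q^{(n+1)(n+2)/2}=q^{n(n+1)/2}q^{n+1}$. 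This recasts the tail as
\begin{equation*}
\frac{(1-tq^{N})(b-a)tq}{(1-t)(1-bq)(1-tq)}\sum_{n=0}^{N-1}(-1)^{n}\frac{(q)_{N}}{(q)_{N-n-1}}\frac{(bq/a)_{n}(atq)^{n}q^{n(n+1)/2}}{(bq^{2})_{n}(tq^{2})_{n}}.
\end{equation*}

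Next I would turn this finite sum into a terminating ${}_3\phi_2$. Using the first identity in \eqref{elemid} with $c=1$ and $N$ replaced by $N-1$, one verifies that $(-1)^{n}\frac{(q)_{N}}{(q)_{N-n-1}}q^{n(n+1)/2}=(1-q^{N})(q^{1-N})_{n}q^{Nn}$, and since $(atq)^{n}q^{Nn}=(atq^{N+1})^{n}$ the sum equals
\begin{equation*}
(1-q^{N})\,{}_3\phi_2\begin{bmatrix} q^{1-N},\,bq/a,\,q\,;\,q,\,atq^{N+1}\\ bq^{2},\,tq^{2}\end{bmatrix}.
\end{equation*}
At this point I would replace $N$ by $N+1$ throughout, exactly as in the proofs of Theorems \ref{thm1} and \ref{thm3}: the upper parameter becomes $q^{-N}$ and the argument becomes $atq^{N+2}$. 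Finally, applying \eqref{lemma4wala} with $a\mapsto a$, $b\mapsto bq$, $t\mapsto tq$ identifies this ${}_3\phi_2$ as $\tfrac{1-tq}{1-tq^{N+1}}F_N(a,bq;tq)$, since $(tq)_{N}/(tq^{2})_{N}=(1-tq)/(1-tq^{N+1})$; substituting this back makes the spurious factors $(1-tq^{N+1})$ and $(1-tq)$ cancel, leaving precisely \eqref{thm5}.

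The index shift and the cancellation of Pochhammer symbols are routine; the delicate part is the passage through \eqref{elemid} followed by the relabelling $N\mapsto N+1$. One must track the finite prefactors with real care, because it is exactly the interplay between the $(1-q^{N})$ produced by \eqref{elemid}, the factor $(1-tq^{N})/(1-t)$ inherited from \eqref{lemma4wala}, and the ratio $(1-tq)/(1-tq^{N+1})$ arising when the ${}_3\phi_2$ is re-identified via \eqref{lemma4wala} that conspires to collapse into the clean coefficients $(1-tq^{N+1})/(1-t)$ and $(1-q^{N+1})(b-a)tq/[(1-bq)(1-t)]$ of \eqref{thm5}. Getting every power of $q$ correct in $q^{(n+1)(n+2)/2}$ and in the argument $atq^{N+1}$ is where an error is most likely to creep in, so I would double-check that stage against the small cases $N=0,1$ before committing to the general computation.
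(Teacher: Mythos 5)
Your proposal is correct and follows essentially the same route as the paper: both start from \eqref{lemma4wala}, peel off the $n=0$ term, shift the index to extract $(b-a)tq/[(1-bq)(1-tq)]$, convert the tail via \eqref{elemid} into a terminating ${}_3\phi_2$ with upper parameter $q^{1-N}$ and argument $atq^{N+1}$, relabel $N\mapsto N+1$, and identify the result as $\frac{1-tq}{1-tq^{N+1}}F_N(a,bq;tq)$. The only cosmetic difference is that you perform the final identification by substituting $b\mapsto bq$, $t\mapsto tq$ directly in \eqref{lemma4wala}, whereas the paper re-invokes \eqref{lemma1wala} with $a\to q$, $b\to aq$, $c\to bq^2$, $t\to tq$ --- but since \eqref{lemma4wala} is exactly that specialization of \eqref{lemma1wala}, the two steps are identical in content, and all of your intermediate coefficients and $q$-powers check out.
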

	\begin{proof}
	From \eqref{lemma4wala},
\begin{align}
F_N(a,b;t) &= \frac{1-tq^N}{1-t} + \frac{(1-tq^N)}{(1-t)}\sum_{n=1}^{N}(-1)^n
\begin{bmatrix}
N\\
n
\end{bmatrix}\frac{(b/a)_n(q)_n(at)^nq^{n(n+1)/2}}{(bq)_n(tq)_n}\nonumber\\
&= \frac{1-tq^N}{1-t} + \frac{(1-tq^N)(b-a)t}{(1-t)(1-bq)(1-tq)}\sum_{n=0}^{N-1}(-1)^{n}
\frac{(q)_{N}(bq/a)_{n}(at)^{n}q^{(n+2)(n+1)/2}}{(q)_{N-n-1}(bq^2)_{n}(tq^2)_{n}}.
\end{align}	
Now \eqref{elemid} with $c=1/q$ gives
\begin{equation}
(q^{1-N})_{n} = \frac{(-1)^n q^{n(n-1)/2}(q)_{N-1}}{(q)_{N-n-1}q^{(N-1)n}}.
\end{equation}
Hence
\begin{align}\label{jbf}
F_{N}(a, b; t)&= \frac{1-tq^N}{1-t} + \frac{(1-tq^N)(b-a)(1-q^N)tq}{(1-t)(1-bq)(1-tq)}\sum_{n=0}^{N-1}
\frac{(q^{1-N})_{n}q^{Nn+n}(bq/a)_{n}(at)^{n}}{(bq^2)_{n}(tq^2)_{n}}\nonumber\\
&= \frac{1-tq^{N+1}}{1-t} + \frac{(1-tq^{N+1})(b-a)(1-q^{N+1})tq}{(1-t)(1-bq)(1-tq)}\sum_{n=0}^{N}
\frac{(q^{-N})_{n}q^{(N+1)n+n}(bq/a)_{n}(at)^{n}}{(bq^2)_{n}(tq^2)_{n}}.
\end{align}
Now let $b \rightarrow aq, c \rightarrow bq^2, a \rightarrow q$ and replace $t$ by $tq$ in \eqref{lemma1wala} to obtain
\begin{align*}
\sum_{n=0}^{N}
\frac{(q^{-N})_{n}q^{(N+1)n+n}(bq/a)_{n}(at)^{n}}{(bq^2)_{n}(tq^2)_{n}}=\frac{(1-tq)}{(1-tq^{N+1})}F_N(a, bq; tq).
\end{align*}
Finally substitute the above equation in \eqref{jbf} to complete the proof. 
	\end{proof}
In the next result, we transform $F_N(a, b; t)$ to $F_N(a, bq; t)$.
\begin{corollary}\label{thm6}
	We have
	\begin{equation}
	F_{N}(a,b;t)= \frac{1-tq^{N+1}}{1-t}-\frac{(b-a)(1-tq^{N+1})t}{(1-t)(b-at)}+\frac{(b-a)(1-bq^{N+2})t}{(1-bq)(b-at)}F_{N}(a,bq;t).
	\end{equation}
\end{corollary}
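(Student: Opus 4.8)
The plan is to obtain this statement as a genuine corollary of the two transformations proved just above, namely equation \eqref{thm5} and Theorem \ref{thm3}. Equation \eqref{thm5} already expresses $F_N(a,b;t)$ in terms of $F_N(a,bq;tq)$, so the second argument has been advanced to the value $bq$ required in the corollary, but the third argument has been pushed from $t$ to $tq$. The idea is therefore to use Theorem \ref{thm3} to rewrite $F_N(a,bq;tq)$ back in terms of $F_N(a,bq;t)$, and then to substitute this into \eqref{thm5} and clear the common factors.

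First I would replace $b$ by $bq$ throughout Theorem \ref{thm3}, which gives
\begin{equation*}
F_N(a,bq;t) = \frac{(1-bq)(1-tq^{N+1})}{(1-t)(1-bq^{N+2})}+ \frac{(1-q^{N+1})(bq-atq)}{(1-bq^{N+2})(1-t)}F_N(a,bq;tq).
\end{equation*}
The simplification $bq-atq=q(b-at)$ then lets me solve this equation for $F_N(a,bq;tq)$:
\begin{equation*}
F_N(a,bq;tq) = \frac{(1-bq^{N+2})(1-t)}{q(1-q^{N+1})(b-at)}\left(F_N(a,bq;t)-\frac{(1-bq)(1-tq^{N+1})}{(1-t)(1-bq^{N+2})}\right).
\end{equation*}

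Next I would insert this expression into \eqref{thm5}. In the coefficient of the bracket the factors $(1-q^{N+1})$, $(1-t)$ and one power of $q$ cancel, leaving $\frac{(b-a)t(1-bq^{N+2})}{(1-bq)(b-at)}$, which is precisely the coefficient of $F_N(a,bq;t)$ in the corollary. The remaining constant contribution of the bracket is $-\frac{(b-a)t(1-bq^{N+2})}{(1-bq)(b-at)}\cdot\frac{(1-bq)(1-tq^{N+1})}{(1-t)(1-bq^{N+2})}$, in which $(1-bq)$ and $(1-bq^{N+2})$ cancel to give $-\frac{(b-a)t(1-tq^{N+1})}{(b-at)(1-t)}$; adding the leading term $\frac{1-tq^{N+1}}{1-t}$ inherited from \eqref{thm5} then reproduces the three summands of the claimed identity exactly. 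Since the end result is an identity between rational functions, it persists for all admissible values even though the division step above is performed under the generic assumption $b\neq at$.

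There is no genuine conceptual obstacle: the corollary is a formal consequence of \eqref{thm5} and Theorem \ref{thm3}, and the only point requiring care is the bookkeeping of the rational prefactors, especially the sign and the factor $(b-at)$ introduced by the division step, and making sure the cancellations leave the middle term over $(1-t)(b-at)$ rather than over $(1-bq)(b-at)$. As a check on the algebra I would let $N\to\infty$; since $q^{N+1}\to 0$ the identity collapses to $F(a,b;t)=\frac{b}{b-at}+\frac{(b-a)t}{(1-bq)(b-at)}F(a,bq;t)$, a seed identity advancing $b\mapsto bq$ of exactly the type sought in the introduction, whose $t^0$ and $t^1$ coefficients one can verify directly against the definition of $F(a,b;t)$.
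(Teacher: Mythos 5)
Your proof is correct and follows exactly the paper's route: the paper likewise obtains the corollary by replacing $b$ with $bq$ in Theorem \ref{thm3}, solving for $F_N(a,bq;tq)$, and substituting that expression into \eqref{thm5}. Your explicit bookkeeping of the cancellations (and the limiting check as $N\to\infty$ against Fine's Equation (4.4)) simply fills in the algebra the paper leaves to the reader.
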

\begin{proof}
	Replace $b$ by $bq$ in Theorem \ref{thm3} and then substitute the resulting expression for $F_N(a, bq; tq)$ in \eqref{thm5}.
\end{proof}
Letting $N\to\infty$ in Corollary \ref{thm6} gives \cite[Equation (4.4)]{fine}
\begin{equation*}
	F(a,b;t) = \frac{b}{b-at}+\frac{(b-a)t}{(1-bq)(b-at)}F(a,bq;t).
\end{equation*}
Our next results relates $F_N(a, b; t)$ with $F_N(aq, b;t)$.
\begin{corollary}\label{thm7}
	We have
	\begin{align*}
F_{N}(a,b;t)&=
1-\frac{b(1-aq)(1-q^{N+1})(1-tq^{N+1})}{(1-tq^N)(1-bq^{N+2})(b-aq)}+ \frac{(1-aq)(1-q^{N+1})(b-aqt)}{(1-tq^N)(b-aq)(1-bq^{N+2})}F_{N}(aq,b;t).
	\end{align*}
\end{corollary}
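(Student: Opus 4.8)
The plan is to derive this as a consequence of Theorem \ref{thm1} and Corollary \ref{thm6} by eliminating an intermediate function. The target advances the first parameter, $(a,b)\mapsto(aq,b)$, whereas Theorem \ref{thm1} advances both parameters, $(a,b)\mapsto(aq,bq)$, and Corollary \ref{thm6} advances only the second, $(a,b)\mapsto(a,bq)$. The common ``bridge'' is therefore $F_N(aq,bq;t)$: Theorem \ref{thm1} expresses it through $F_N(a,b;t)$, while Corollary \ref{thm6} with $a$ replaced by $aq$ expresses it through $F_N(aq,b;t)$. Equating the two representations of $F_N(aq,bq;t)$ and solving for $F_N(a,b;t)$ should produce exactly the claimed relation involving $F_N(aq,b;t)$.

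Concretely, first I would rewrite Theorem \ref{thm1} as
\begin{equation*}
F_N(aq,bq;t)=\frac{(1-bq)(1-tq^N)}{t(1-aq)(1-q^{N+1})}\bigl(F_N(a,b;t)-1\bigr).
\end{equation*}
Next I would replace $a$ by $aq$ throughout Corollary \ref{thm6} to obtain
\begin{equation*}
F_N(aq,b;t)=\frac{1-tq^{N+1}}{1-t}-\frac{(b-aq)(1-tq^{N+1})t}{(1-t)(b-aqt)}+\frac{(b-aq)(1-bq^{N+2})t}{(1-bq)(b-aqt)}F_N(aq,bq;t).
\end{equation*}
Substituting the first displayed expression into the right-hand side of the second yields a single linear relation between $F_N(a,b;t)$ and $F_N(aq,b;t)$, which I would then solve for $F_N(a,b;t)$. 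Writing $A=\dfrac{t(1-aq)(1-q^{N+1})}{(1-bq)(1-tq^N)}$ for the coefficient in Theorem \ref{thm1} and $Q=\dfrac{(b-aq)(1-bq^{N+2})t}{(1-bq)(b-aqt)}$, $P=\dfrac{1-tq^{N+1}}{1-t}-\dfrac{(b-aq)(1-tq^{N+1})t}{(1-t)(b-aqt)}$ for the coefficient and constant in the $a\mapsto aq$ version of Corollary \ref{thm6}, the elimination gives $F_N(a,b;t)=\bigl(1-\tfrac{AP}{Q}\bigr)+\tfrac{A}{Q}\,F_N(aq,b;t)$.

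The real work is the arithmetic simplification of $A/Q$ and $AP/Q$. In $A/Q$ the factors $t$ and $(1-bq)$ cancel, leaving precisely $\dfrac{(1-aq)(1-q^{N+1})(b-aqt)}{(1-tq^N)(b-aq)(1-bq^{N+2})}$, which is the asserted coefficient of $F_N(aq,b;t)$. For the constant term the key reduction is of $P$ itself: writing it as $\dfrac{(1-tq^{N+1})}{1-t}\Bigl(1-\dfrac{(b-aq)t}{b-aqt}\Bigr)$ and using the identity $(b-aqt)-(b-aq)t=b(1-t)$, it collapses to $P=\dfrac{b(1-tq^{N+1})}{b-aqt}$. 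The spurious factor $(b-aqt)$ then cancels against the numerator of $A/Q$, so that $AP/Q=\dfrac{b(1-aq)(1-q^{N+1})(1-tq^{N+1})}{(1-tq^N)(b-aq)(1-bq^{N+2})}$, yielding the stated constant $1-AP/Q$. The main obstacle is thus bookkeeping rather than conceptual: one must track the cancellation of $(b-aqt)$ and of $(1-bq)$ carefully, and it is precisely the identity $(b-aqt)-(b-aq)t=b(1-t)$ that makes $P$ telescope into the clean form needed for the final expression.
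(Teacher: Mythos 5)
Your proposal is correct and is essentially the paper's own proof: the paper likewise eliminates the bridge function $F_N(aq,bq;t)$ by replacing $a$ with $aq$ in Corollary \ref{thm6} and combining with Theorem \ref{thm1}, which is exactly your elimination (your substitution runs in the opposite direction, but that is the same linear elimination). Your simplifications of $A/Q$ and $AP/Q$, including the key reduction $P=\dfrac{b(1-tq^{N+1})}{b-aqt}$, check out and yield precisely the stated coefficients.
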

\begin{proof}
	Replace $a$ by $aq$ in Corollary \ref{thm6} and then substitute the resulting expression for $F_N(aq, bq; t)$ in Theorem \ref{thm1} .
\end{proof}
Letting $N\to\infty$ in Corollary \ref{thm7} gives \cite[Equation (4.5)]{fine}
\begin{align*}
F(a,b;t) = -\frac{(1-b)aq}{b-aq} + \frac{(1-aq)(b-atq)}{b-aq}F(aq,b;t).
\end{align*}
Our next theorem transforms $F_N(a, b; t)$ to $F_N(aq,b;tq)$.

\begin{theorem}
	\begin{align*}
		F_{N}(a,b;t)&= 1+\frac{(1-aq)(1-q^{N+1})(1-tq^{N+1})}{(1-tq^N)(1-bq^{N+2})(1-t)}\large\left(t-\frac{(b-aqt)}{(b-aq)}+\frac{(b-aqt)(1-b)}{(b-aq)(1-bq^{N+1})}\large\right)\nonumber\\
		&\quad+ \frac{(1-aq)(1-q^{N+1})^2(b-aqt)(b-atq^2)}{(1-tq^N)(b-aq)(1-bq^{N+2})(1-bq^{N+1})(1-t)}F_{N}(aq,b;tq).
	\end{align*}
\end{theorem}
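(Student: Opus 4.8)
The plan is to derive this transformation by composing two results already established, exactly as Corollaries \ref{thm6} and \ref{thm7} were obtained. Since the statement carries $(a,b;t)$ to $(aq,b;tq)$, and since Corollary \ref{thm7} already advances $F_N(a,b;t)$ to $F_N(aq,b;t)$ while Theorem \ref{thm3} advances the third argument from $t$ to $tq$, I would chain these two: first express $F_N(a,b;t)$ through $F_N(aq,b;t)$ via Corollary \ref{thm7}, and then eliminate $F_N(aq,b;t)$ using the $a\mapsto aq$ instance of Theorem \ref{thm3}. Replacing $a$ by $aq$ in Theorem \ref{thm3} and noting that $b-aq\cdot tq=b-atq^2$ gives
\begin{equation*}
F_N(aq,b;t) = \frac{(1-b)(1-tq^{N+1})}{(1-t)(1-bq^{N+1})}+ \frac{(1-q^{N+1})(b-atq^2)}{(1-bq^{N+1})(1-t)}F_N(aq,b;tq),
\end{equation*}
which is precisely the object needed to replace the $F_N(aq,b;t)$ appearing in Corollary \ref{thm7}.

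Substituting this into Corollary \ref{thm7} immediately produces the coefficient of $F_N(aq,b;tq)$ as the product of the two respective coefficients, namely
\begin{equation*}
\frac{(1-aq)(1-q^{N+1})(b-aqt)}{(1-tq^N)(b-aq)(1-bq^{N+2})}\cdot\frac{(1-q^{N+1})(b-atq^2)}{(1-bq^{N+1})(1-t)},
\end{equation*}
and this already equals the stated factor $\frac{(1-aq)(1-q^{N+1})^2(b-aqt)(b-atq^2)}{(1-tq^N)(b-aq)(1-bq^{N+2})(1-bq^{N+1})(1-t)}$ with no further manipulation, so that part is routine.

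The only genuinely delicate task is to collect the three parameter-free terms into the bracketed expression claimed in the statement. After substitution one is left with the constant $1$, the middle term $-\frac{b(1-aq)(1-q^{N+1})(1-tq^{N+1})}{(1-tq^N)(1-bq^{N+2})(b-aq)}$ inherited from Corollary \ref{thm7}, and a new term arising from the $\frac{(1-b)(1-tq^{N+1})}{(1-t)(1-bq^{N+1})}$ piece of Theorem \ref{thm3}. Writing $P=\frac{(1-aq)(1-q^{N+1})(1-tq^{N+1})}{(1-tq^N)(1-bq^{N+2})}$ and factoring $\frac{P}{1-t}$ out of the target bracket, the new term matches the summand $\frac{(b-aqt)(1-b)}{(b-aq)(1-bq^{N+1})}$ verbatim. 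The main obstacle is to check that the remaining two leftover terms reproduce the pieces $t-\frac{b-aqt}{b-aq}$ of the bracket, and this reduces to the elementary identity
\begin{equation*}
t-\frac{b-aqt}{b-aq}=-\frac{b(1-t)}{b-aq}.
\end{equation*}
The factor $(1-t)$ then cancels against the prefactor $\frac{1}{1-t}$ and recovers exactly the coefficient $-\frac{b}{b-aq}$ attached to Corollary \ref{thm7}'s middle term. Once this single cancellation is recognized, the identity follows by pure bookkeeping, and no transformation beyond Corollary \ref{thm7} and Theorem \ref{thm3} is needed.
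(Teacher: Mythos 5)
Your proposal is correct and follows exactly the paper's own route: replacing $a$ by $aq$ in Theorem \ref{thm3} and substituting the resulting expression for $F_N(aq,b;t)$ into Corollary \ref{thm7}. The algebraic bookkeeping you carry out, including the key simplification $t-\frac{b-aqt}{b-aq}=-\frac{b(1-t)}{b-aq}$, is valid and merely makes explicit the computation the paper leaves to the reader.
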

\begin{proof}
Replace $a$ by $aq$ in Theorem \ref{thm3} and then substitute the resulting expression for $F_N(aq,b;t)$ in Corollary \ref{thm7}.
\end{proof}
The identity in the above theorem leads to the following identity when we let $N\to\infty$:
\begin{align*}
	F(a,b;t)= \large\left(\frac{1-b}{1-t}\large\right)\large\left(1-\frac{(b-atq)}{(b-aq)}aq\large\right) + \frac{(1-aq)(b-atq)(b-atq^2)}{(1-t)(b-aq)}F(aq,b;tq),
\end{align*}
which is Equation (4.6) from \cite{fine}.

Our final result is the transformation between $F_N(a, b; t)$ and $F_N(aq, bq; tq)$.
 \begin{theorem}\label{thm10}
 	We have
 	\begin{align*}
 		F_{N}(a,b;t) = 1 + \frac{(1-aq)(1-q^{N+1})(1-tq^{N+1})t}{(1-tq^N)(1-bq^{N+2})(1-t)} + \frac{(1-aq)(1-q^{N+1})^2(b-atq)tq}{(1-bq)(1-tq^{N})(1-bq^{N+2})(1-t)}F_{N}(aq,bq;tq).
 	\end{align*}
 \end{theorem}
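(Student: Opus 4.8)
The plan is to obtain this last seed identity as a formal composition of the two seed transformations already established, in the same way the intermediate corollaries were assembled. Theorem~\ref{thm1} advances the pair $(a,b)$ to $(aq,bq)$ while leaving $t$ untouched, and Theorem~\ref{thm3} advances $t$ to $tq$ while leaving $(a,b)$ untouched; chaining the first into the second should carry $F_N(a,b;t)$ all the way to $F_N(aq,bq;tq)$.

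Concretely, I would first write, by Theorem~\ref{thm1},
\begin{equation*}
F_N(a,b;t)= 1+\frac{t(1-aq)(1-q^{N+1})}{(1-bq)(1-tq^N)}F_N(aq,bq;t),
\end{equation*}
and then apply Theorem~\ref{thm3} with $a$ replaced by $aq$ and $b$ replaced by $bq$, which reads
\begin{equation*}
F_N(aq,bq;t) = \frac{(1-bq)(1-tq^{N+1})}{(1-t)(1-bq^{N+2})}+ \frac{(1-q^{N+1})(bq-atq^2)}{(1-bq^{N+2})(1-t)}F_N(aq,bq;tq).
\end{equation*}
Substituting the second display into the first and separating the three resulting pieces should produce the claimed identity.

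The only work left is the bookkeeping in this substitution, and I expect no genuine obstacle. The constant term is simply the $1$ coming from Theorem~\ref{thm1}. In the coefficient multiplying the first term of Theorem~\ref{thm3} the factor $(1-bq)$ cancels against the $(1-bq)$ in the denominator coming from Theorem~\ref{thm1}, leaving the stated middle term $\frac{(1-aq)(1-q^{N+1})(1-tq^{N+1})t}{(1-tq^N)(1-bq^{N+2})(1-t)}$. For the last term one uses $bq-atq^2=q(b-atq)$, so the coefficient of $F_N(aq,bq;tq)$ collapses to $\frac{(1-aq)(1-q^{N+1})^2(b-atq)tq}{(1-bq)(1-tq^{N})(1-bq^{N+2})(1-t)}$, exactly as stated. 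The one point to watch is the order of the two substitutions: advancing $(a,b)$ first and $t$ second turns the $(1-bq^{N+1})$ factors of Theorem~\ref{thm3} into the $(1-bq^{N+2})$ factors appearing in the statement, whereas performing the steps in the reverse order would leave $(1-bq^{N+1})$ factors that do not match the stated form.
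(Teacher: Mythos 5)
Your proposal is correct and is exactly the paper's proof: the paper likewise replaces $a$ and $b$ by $aq$ and $bq$ in Theorem~\ref{thm3} and substitutes the resulting expression for $F_N(aq,bq;t)$ into Theorem~\ref{thm1}, and your bookkeeping (the cancellation of $(1-bq)$ in the middle term and the factorization $bq-atq^2=q(b-atq)$ in the last) checks out.
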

\begin{proof}
Replace $a$ and $b$ in Theorem \ref{thm3} by $aq$ and $bq$ respectively and then substitute the resulting expression for $F_N(aq, bq; t)$ in Theorem \ref{thm1}.
\end{proof}
Letting $N\to\infty$ in Theorem \ref{thm10} gives \cite[Equation (4.1)]{fine}:
\begin{align*}
	F(a,b;t) = \frac{(1-atq)}{(1-t)} + \frac{(1-aq)(b-atq)tq}{(1-bq)(1-t)} F(aq,bq;tq).
\end{align*}

\end{document}